\numberwithin{equation}{section}
\theoremstyle{plain}
\newtheorem{theorem}{Theorem}[section]
\newtheorem{claim}[theorem]{Claim}
\newtheorem{conjecture}[theorem]{Conjecture}
\newcommand{\Lam}{\Lambda}
\newcommand{\e}{\epsilon}
\title{A restricted projection problem for fractal sets in $\R^n$}
\date{}
\author[Gan]{Shengwen Gan}
\address{Department of Mathematics\\
Massachusetts Institute of Technology\\
Cambridge, MA 02142-4307, USA}
\email{shengwen@mit.edu}
\author[Guo]{Shaoming Guo}
\address{Department of Mathematics\\
University of Wisconsin-Madison\\
Madison, WI-53706, USA}
\email{shaomingguo@math.wisc.edu}
\author[Wang]{Hong Wang}
\address{Department of Mathematics\\
UCLA\\
Los Angeles, CA 90095, USA}
\email{hongwang@math.ucla.edu}
\begin{document}

\maketitle
\begin{abstract}
    Let $\bgamma: [-1, 1]\to \R^n$ be a smooth curve that is non-degenerate. Take $m\le n$ and a Borel set $E\subset [0, 1]^n$. We prove that the orthogonal projection of $E$ to the $m$-th order tangent space of $\bgamma$ at $\theta\in [-1, 1]$ has Hausdorff dimension $\min\{m, \dim(E)\}$ for almost every $\theta\in [-1, 1]$. 
\end{abstract}

\section{Introduction}

Let $V\subset \mathbb{R}^n$ be a $m$--dimensional subspace, the orthogonal projections $\pi_{V}$ is defined as 
\[
P_{V}: \mathbb{R}^n\rightarrow \mathbb{R}^m, \quad P_V(x):= v
\]
where $\mathbb{R}^n= V \bigoplus U$ and $x=v+u$ is the unique decomposition with $v\in V$ and $u\in U$. 

Given a Borel subset $\Lambda \subset Gr(m, \mathbb{R}^n)$, the Grassmannian of $m$-dimensional linear subspaces, and a compact Borel  set $E\subset \mathbb{R}^n$, a general question about orthogonal projection is  to determine 
\[
\sup_{V\in \Lambda} \dim (P_V(E))
\]
in terms of $\dim (\Lambda)$ and $\dim (E)$.   The classical Marstrand's projection theorem (\cite{Mar54}, \cite{Mat15}) states that 
\begin{equation}\label{marstrand}
    \dim (P_{V} (E))  = \min \{\dim(E), m\} \textup{~for~a.e.} \  V\in Gr(m,\R^n), 
\end{equation}
The question for general Borel subset of subpaces $\Lambda \subset Gr(m, \mathbb{R}^n)$ was later studied intensively, including 
Mattila \cite{Mat75}, Falconer \cite{Fal80}, Bourgain \cite{Bou10}, Peres and Schlag \cite{PS00}. Analogous to \eqref{marstrand}, one may conjecture that
\begin{equation}\label{conje}
     \dim (P_{V} (E))  = \min \{\dim (E), m\} \textup{~for~a.e.} \  V\in\Lambda,
\end{equation}
where ``a.e." is with respect to certain probability measure supported on $\Lam$, chosen naturally depending on the choice of $\Lambda$.

However, \eqref{conje} fails if we do not make further assumptions on $\Lam$. We discuss some examples in $\R^3$. The examples in higher dimensions can be constructed in the same way.
When $m=1$, let $E$ be the $x_3$-axis and let $\Lambda$ consist of lines in the $(x_1,x_2)$-plane. We see that $\dim(E)=1$, while $\dim(P_V(E))=0$ for all $V\in\Lambda$ which violates \eqref{conje}. When $m=2$, let $E$ be the $(x_1,x_2)$-plane and let $\Lambda$ consist of planes containing the $x_3$-axis. We see that $\dim(E)=2$, while $\dim(P_V(E))=1$ for all $V\in\Lambda$ which violates \eqref{conje}. It is natural to ask whether one can quantitatively exclude the above degenerate scenario and provide good estimates about $\sup_{V\in \Lambda}\dim (P_V(E))$.

One such example is the following conjecture by F\"{a}ssler and Orponen \cite{FO14}. 
\begin{conjecture}[F\"{a}ssler-Orponen]\label{conj: restricted proj}
Let $I\subset \mathbb{R}$ be a compact interval and let $\gamma: I\rightarrow \mathbb{S}^2$ be a $C^2$ curve that ``escapes the great circle''
\begin{equation}\label{240120non_degenerate}
\text{span}\{\gamma(\theta), \gamma'(\theta), \gamma''(\theta)\} =\mathbb{R}^3 \quad \text{for all }\theta\in I.
\end{equation}
Let $E\subset \mathbb{R}^3$ be analytic. Then for almost every $\theta\in I$, 
\begin{enumerate}
\item [(a)] $\dim (P_{\gamma(\theta)} E) =\min \{ \dim (E), 1\}.$
\item [(b)] $\dim  (P_{ \gamma(\theta)^{\perp}} E) =\min \{\dim (E), 2\}.$
\end{enumerate}
\end{conjecture}

This conjecture was studied by F\"assler and Orponen \cite{FO14}, Orponen \cite{Orp15}, Oberlin and Oberlin \cite{OO15}, Orponen and Venieri \cite{OV20} and Harris \cite{Har19, Har21, Har22}. 
Part (a) of Conjecture~\ref{conj: restricted proj} was resolved by K\"aenm\"aki, Orponen and Venieri \cite{KOV17} in the case where $\gamma$ is a small circle, and by  Pramanik, Yang and Zahl \cite{PYZ22} for the case of general $C^2$ curves, and also later by Gan, Guth and Maldague \cite{GGM22} independently. Indeed these papers prove something much stronger, in the sense that they contain very good exceptional set estimates. Part (b) of Conjecture~\ref{conj: restricted proj} was resolved by Gan, Guo, Guth, Harris, Maldague and Wang \cite{GGG22}.

The restricted projection problem has applications in ergodic theory, see the work of Lindenstrauss and Mohammadi \cite{LM22}, Lindenstrauss, Mohammadi, Wang and Yang \cite{LMWY23}, and a special case of the Kakeya problem, see recent works of F\"{a}ssler and Orponen \cite{FO22} and Katz, Wu and Zahl \cite{KWZ22}. \\

We prove a generalization of Conjecture~\ref{conj: restricted proj} for higher dimensions. 
In $\R^n$, let
\begin{equation}
    \bgamma: [-1, 1]\to \R^n
\end{equation}
be a smooth non-degenerate curve, that is, 
\begin{equation}
    \det[\bgamma'(\theta), \dots, \bgamma^{(n)}(\theta)]\neq 0,
\end{equation}
for every $\theta\in [-1, 1]$. 
For $1\le m\le n$, define the $m$-th order tangent space of $\bgamma$ at $\theta$ by  
\begin{equation}
    \tangm_{\theta}:=\spn[\bgamma'(\theta), \dots, \bgamma^{(m)}(\theta)].
\end{equation}
Let $\pim_{\theta}$ denote the orthogonal projection to $\tangm_{\theta}$. 

\begin{theorem}\label{221013theorem1_1}
Let $n\ge 2, 1\le m\le n$ and $\bgamma: [-1, 1]\to \R^n$ be a non-degenerate curve. Let $E\subset [0, 1]^n$ be a Borel measurable set. Then 
\begin{equation}
    \dim(
    \pim_{\theta}(E)
    )= \min\{\dim(E), m\},
\end{equation}
for almost every $\theta\in [-1, 1]$. Here $\dim$ refers to the Hausdorff dimension. 
\end{theorem}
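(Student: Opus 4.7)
The upper bound $\dim(\pim_\theta(E)) \le \min\{\dim(E),m\}$ is immediate because $\pim_\theta$ is $1$-Lipschitz with range inside an $m$-dimensional subspace. For the matching lower bound, I would follow the potential-theoretic scheme. Fix $s<\min\{\dim(E),m\}$, use Frostman's lemma to obtain a probability measure $\mu$ on $E$ with finite $t$-energy for some $t\in(s,\dim E)$, and aim to prove
\[
\int_{-1}^{1} I_s(\pim_\theta\mu)\,d\theta<\infty.
\]
This will force $I_s(\pim_\theta\mu)<\infty$ for a.e.\ $\theta$, hence $\dim\pim_\theta(E)\ge s$, and letting $s\uparrow\min\{\dim E,m\}$ will close the argument.

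A first attempt is to reduce via Fubini to the pointwise bound $\int_{-1}^{1}|\pim_\theta(z)|^{-s}\,d\theta\lesssim |z|^{-s}$ for $s<m$, uniformly in $z\ne 0$. However, this already fails: when $m=n-1$, $|\pim_\theta(z)|$ vanishes exactly when $z$ is parallel to the unit normal $N(\theta)$, and a straightforward non-degeneracy computation yields the bound only for $s<1$, not for $s<n-1$. So one cannot treat each $z$ separately, and the proof must exploit the joint averaging over $\theta$ \emph{and} the support of $\mu$.

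The strategy I would pursue is Fourier analytic. By Plancherel on each $\tangm_\theta\cong\R^m$,
\[
I_s(\pim_\theta\mu)\asymp \int_{\tangm_\theta}|\wh\mu(\xi)|^2|\xi|^{s-m}\,d\mathcal H^m(\xi),
\]
so the task becomes estimating the $\theta$-average of the $L^2(|\xi|^{s-m})$-norm of $\wh\mu$ restricted to the $(m+1)$-dimensional manifold $\bigcup_\theta\tangm_\theta\subset\R^n$. Dyadically decomposing in $|\xi|\sim R$ and parametrizing $\tangm_\theta$ via $\xi=\sum_{j=1}^m a_j\bgamma^{(j)}(\theta)$ (whose Jacobian is controlled by the non-degeneracy $\det[\bgamma',\dots,\bgamma^{(n)}]\ne 0$), one seeks an estimate of the form
\[
\int_{-1}^{1}\int_{\tangm_\theta\cap\{|\xi|\sim R\}}|\wh\mu(\xi)|^2\,d\mathcal H^m(\xi)\,d\theta\lesssim R^{s}\cdot I_t(\mu)
\]
that closes the dyadic summation. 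The required input is a quantitative transversality estimate between $\tangm_\theta$ and $\tangm_{\theta'}$ for $|\theta-\theta'|$ not too small, encoding how fast the tangent planes rotate as $\theta$ varies.

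The main obstacle, and the heart of the argument, is obtaining the sharp exponent $m$ in this $L^2$ estimate---equivalently, absorbing the dimensional gap between $m+1$ and $n$. In the cases $n=3$, $m\in\{1,2\}$ already treated in the literature, this has been accomplished by refined incidence/Kakeya arguments or by small-cap decoupling. For general $(n,m)$ I expect the proof either to induct on dimension via a slicing argument---reducing the $m$-tangent problem in $\R^n$ to a lower-order variant on a generic hyperplane---or to establish a unified decoupling/multilinear inequality for the moment-like family $\theta\mapsto\tangm_\theta$. Executing this scale-by-scale argument uniformly in $R$ up to the endpoint $s\to m$ will be the key technical challenge.
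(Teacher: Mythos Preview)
Your outline correctly isolates the standard reduction (upper bound trivial, Frostman for the lower bound, Fourier on the fibres) and is honest about where the difficulty lies. But the concrete scheme you write down---bound $\int_{-1}^1 I_s(\pim_\theta\mu)\,d\theta$ by an $L^2$ estimate for $\widehat\mu$ on $\bigcup_\theta\tangm_\theta$---is precisely the route of the earlier partial results \cite{JJLL08}, \cite{JJK14}, \cite{FO14}, and it is known not to reach the sharp threshold $\min\{m,\dim E\}$ for general $(n,m)$. The reason is structural: in $L^2$ there is no decoupling gain (Plancherel already gives orthogonality), so your closing appeal to ``a unified decoupling/multilinear inequality'' has nowhere to act inside the $L^2$ energy framework you set up. The proposal stops exactly where the real difficulty begins and offers no mechanism to close it; that is the gap.

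The paper takes a genuinely different route. Instead of energy integrals, it reduces Theorem~\ref{221013theorem1_1} to a tube-counting statement (Theorem~\ref{221013theorem2_1}): if every point of $\supp\mu$ is covered $\gtrsim\delta^{\epsilon-1}$ times by $\delta$-tubes drawn from the families $\{\T_\theta\}_{\theta\in\Lambda_\delta}$, then the total number of tubes is $\gtrsim\mu(\R^n)c_\alpha(\mu)^{-1}\delta^{-1-\alpha+O(\sqrt\epsilon)}$. The deduction of Theorem~\ref{221013theorem1_1} from this is a short covering argument. The substance is in proving Theorem~\ref{221013theorem2_1}, which runs in $L^p$ with $p=n(n+1)$---the critical exponent for Bourgain--Demeter--Guth decoupling---via an induction on $\epsilon$: a low/high frequency split sends the low part back to the induction hypothesis at a coarser scale, while the high part is handled by an iterative frequency-decomposition algorithm (Sections~3--5) indexed by parameters $(m_j,s_j,n_j)$, with a BDG-type decoupling step at each stage. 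The choice $p=n(n+1)$ is what makes the decoupling exponents sharp enough to absorb the dimensional deficit $n-m$; moving from $L^2$ to this high $L^p$ and organizing the frequency support so that decoupling applies repeatedly is the missing idea in your sketch.
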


The role that $\gamma$ plays in Conjecture \ref{conj: restricted proj} is replaced by that of the derivative $\bgamma'$ in Theorem \ref{221013theorem1_1}. For instance, if we take 
\begin{equation}
\bgamma(\theta)=(\theta, \theta^2/2!, \theta^3/3!),
\end{equation}
then 
\begin{equation}\label{240120example1}
\bgamma'(\theta)=(1, \theta, \theta^2/2).
\end{equation}
Recall that $\gamma$ from Conjecture \ref{conj: restricted proj} takes values in $\mathbb{S}^2$. We normalize \eqref{240120example1}, and direct computations show that it satisfies  \eqref{240120non_degenerate}.

The study of restricted projections in general dimensions $\R^n$, similar to the ones in Theorem \ref{221013theorem1_1}, already appeared in 
J\"arvenp\"a\"a, J\"arvenp\"a\"a, Ledrappier and Leikas \cite{JJLL08}, J\"arvenp\"a\"a, J\"arvenp\"a\"a and Keleti \cite{JJK14}. The notion of ``non-degenerate curves" in the current paper is a lot stronger than the ones in \cite{JJLL08} and \cite{JJK14}, and therefore covers a smaller family of curves. On the other hand, the results obtained in Theorem \ref{221013theorem1_1} is also relatively stronger than those in \cite{JJLL08} and \cite{JJK14}.

Our approach follows the framework in \cite{GGG22}, where Fourier analysis methods were used to study incidences between points and fat $k$--planes. The main difficulty in the current paper lies in the decomposition of frequencies and the application of decoupling inequalities.  Our frequency decomposition can be viewed as some generalization of the frequency decomposition in \cite{GWZ}. It is worth mentioning that to obtain Theorem \ref{221013theorem1_1}, we do not need the sharp $L^p$ ranges of decoupling inequalities as in \cite{BDG16} or \cite{BGH21}; every sufficiently large $p$, depending on $n$ and $m$, will work, and for convenience we will work with $p=n(n+1)$ (see the paragraph above \eqref{221014e2_10}). \\


\noindent {\bf Notation.} 
\begin{enumerate}
    
    \item For two positive real numbers $R_1$ and $R_2$, we say that $R_1\lesssim R_2$ if there exists a large constant $C$, depending on relevant parameters, such that $R_1\le C R_2$; we say that $R_1\ll R_2$ if $R_1\le R_2/C$. 
    
    \item Let $\mu$ be a compactly supported Borel measure on $\R^n$. Take $\alpha>0$. Define 
\begin{equation}\label{240120edefi}
    c_{\alpha}(\mu):=\sup_{\bfx\in \R^n, r>0}\frac{\mu(B(\bfx, r))}{r^{\alpha}},
\end{equation}
where $B(x, r)$ is the ball of radius $r$ centered at $x\in \R^n$.

\item For a set $E\subset \R^n$ and $r>0$, we let $r\cdot E$ denote the set $\{r\bfx: \bfx\in E\}$. Let $T\subset \R^n$ be a rectangular box, we let $rT$ be the $r$-dilation of $T$ with respect to the center of $T$. 
\item We make the convention that $k!=\infty$ for every $k\in \Z$, $k<0$. Under this convention, the monomial $t^j/k!$ is always constantly zero whenever $k<0.$
\item We use $\bxi, \bfeta$ to denote frequency variables; they are all $n$-tuples of real numbers. 
\end{enumerate}

\medskip

\noindent {\bf Acknowledgment.} S. Guo is partly supported by NSF-1800274 and NSF-2044828. H. Wang is
supported by NSF Grant DMS-2055544. The authors would like to thank Terry Harris, Changkeun Oh, Dominique Maldague and Larry Guth for inspiring discussions. They would also like to thank the referee for carefully reading the paper, for various suggestions on the exposition of the paper, and for catching several very important typos.



\section{Idea of the proof}

We will prove Theorem \ref{221013theorem1_1} for the case 
\begin{equation}
    \bgamma(\theta)=(\frac{\theta}{1!}, \frac{\theta^2}{2!}, \dots, \frac{\theta^n}{n!}).
\end{equation}
The proof for the general case is essentially the same; the main tool we will be using, the tool of Fourier decoupling inequalities, is stable under perturbations.  \\

For $\theta\in [-1, 1]$, let $\T_{\theta}$ denote a collection of disjoint rectangular boxes of dimensions 
\begin{equation}
    \underbrace{\delta\times \dots\delta}_{m\ \mathrm{ copies}} \times \underbrace{1\times \dots1}_{n-m\ \mathrm{ copies}}
\end{equation}
covering $\R^n$, and the short sides of these rectangular boxes are parallel to $\tangm_{\theta}$. For $T\in \T_{\theta}$, we define its dual box $\tau=\tau(T)$ to be the rectangular box of dimensions 
\begin{equation}
    \underbrace{\delta^{-1}\times \dots\delta^{-1}}_{m\ \mathrm{ copies}} \times \underbrace{1\times \dots1}_{n-m\ \mathrm{ copies}}
\end{equation}
centered at the origin, whose long sides are parallel to $\tangm_{\theta}$. Sometimes we write $\tau=\tau_{\theta}$. 

For $\delta>0$, we say that $\Lambda_{\delta}\subset [-1, 1]$ is a $\delta$-net of $[-1, 1]$ if 
\begin{equation}
    \delta\le |\theta_1-\theta_2|, \ \ \forall \theta_1, \theta_2\in \Lambda_{\delta}, \theta_1\neq \theta_2,
\end{equation}
and for every $\theta\in [-1, 1]$, we can find $\theta'\in \Lambda_{\delta}$ such that $\delta\le |\theta-\theta'|<2\delta$. 

\begin{theorem}\label{221013theorem2_1}
Let $\delta>0$ and $\Lambda_{\delta}$ be a $\delta$-net of $[-1, 1]$. Given $\epsilon>0$ and $\alpha\in (0, m)$, let $\mu$ be a finite non-zero Borel measure supported in the unit ball in $\R^n$ with $c_{\alpha}(\mu)<\infty$. Take $\W_{\theta}\subset \T_{\theta}$ and denote $\W:=\cup \W_{\theta}$. Suppose that 
\begin{equation}\label{221014e2_4}
    \sum_{T\in \W} \chi_T(\bfx)\gtrsim \delta^{\epsilon-1}, \ \ \forall \bfx\in \supp(\mu). 
\end{equation}
Then 
\begin{equation}
    |\W|\ge C_{\epsilon, n, \alpha}\cdot \mu(\R^n) c_{\alpha}(\mu)^{-1} \delta^{-1-\alpha} \delta^{O(\sqrt{\epsilon})}.
\end{equation}
Here it is important that the constant $C_{\epsilon, n, \alpha}$ does not depend on $\delta$. The term $O(\sqrt{\epsilon})$ can be taken to be $10^{10n} \sqrt{\epsilon}$. 
\end{theorem}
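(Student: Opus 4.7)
I would follow the high-low / decoupling framework introduced in \cite{GGG22}. The first step converts the pointwise multiplicity hypothesis \eqref{221014e2_4} into a Fourier-side $L^2$ statement. For each $T\in\W_\theta$ pick a Schwartz bump $\psi_T\gtrsim\chi_T$ whose Fourier transform is supported in the dual box $\tau(T)$, and set $F := \sum_\theta\sum_{T\in\W_\theta}\psi_T$. Integrating \eqref{221014e2_4} against $\mu$ and applying Cauchy--Schwarz gives
\begin{equation*}
\delta^{2(\epsilon-1)}\mu(\R^n) \lesssim \int F^2\, d\mu,
\end{equation*}
so it suffices to prove a matching upper bound of the form $\int F^2 d\mu \lesssim \delta^{\alpha-1-O(\sqrt\epsilon)} c_\alpha(\mu)|\W|$.

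After a standard dyadic pigeonhole that restricts $\W$ to a subcollection on which $\mu(T)$ sits in a single dyadic level, I would introduce a multi-scale Littlewood--Paley decomposition $F = F_{\mathrm{low}} + F_1 + \cdots + F_m$ in which $F_k$ carries the part of $F$ whose Fourier support lies in an anisotropic annular region at scale $\delta^{-k/m}$, reflecting the $k$-th order tangent geometry of $\bgamma$. The low piece is handled by a pointwise estimate from the physical-side sizes of the boxes $T$. For each $F_k$ with $k\geq 1$, the main tool is a Bourgain--Demeter--Guth $\ell^2$-decoupling inequality for the rescaled moment curve at the corresponding scale, giving an inequality of the form
\begin{equation*}
\|F_k\|_{L^{p_k}(B)}^2 \lesssim_{\epsilon} \delta^{-\epsilon}\sum_\theta\|F_{k,\theta}\|_{L^{p_k}(w_B)}^2
\end{equation*}
for a scale-dependent exponent $p_k$. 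One then converts back to $L^2(d\mu)$ using the Frostman-type comparison $\int|g|^2d\mu\lesssim c_\alpha(\mu)R^{n-\alpha}\|g\|_2^2$ valid for $g$ with Fourier support in $B(0,R)$.

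A single pass through the decoupling argument produces a loss of $\delta^{-\epsilon}$ rather than the required $\delta^{-O(\sqrt\epsilon)}$. To close the gap I would formulate the theorem with a free loss exponent $\eta$ and prove inductively that one application of the above argument improves $\eta$ to some $\eta'$ satisfying $\eta'\leq C\sqrt{\epsilon\eta} + C\epsilon$; the fixed point of this recursion gives $\eta = O(\sqrt\epsilon)$, consistent with the claimed $10^{10n}\sqrt\epsilon$.

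The principal obstacle is the design of the multi-scale frequency decomposition. The dual boxes $\tau_\theta$ are rectangles whose long directions span the $m$-th tangent space of $\bgamma$, so $\bigcup_\theta\tau_\theta$ forms a nonstandard $(m+1)$-parameter family of plates rather than a fattened hypersurface. Arranging a dyadic decomposition of frequency space so that at every dyadic scale the Fourier support of the corresponding piece of $F$ lies in a rescaled neighborhood of the moment curve (on which BDG decoupling applies) is precisely the generalization of the decomposition in \cite{GWZ} referred to in the introduction; verifying the requisite geometric compatibility uniformly across all $1\leq m\leq n$, and checking that the intermediate pieces glue back without destroying the decoupling gain, is where the bulk of the new technical work must lie.
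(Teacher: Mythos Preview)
Your outline has the right ingredients---a high/low frequency split, BDG-type decoupling on the high part, and an induction to upgrade a $\delta^{-\epsilon}$ loss to $\delta^{-O(\sqrt\epsilon)}$---but several of the concrete choices diverge from what the paper does, and the one you yourself flag as the ``principal obstacle'' is not actually resolved.

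First, the paper does \emph{not} work in $L^2(d\mu)$. After the high/low split it raises to the $p$-th power with $p=n(n+1)$, replaces $d\mu$ by Lebesgue measure via the Frostman bound, and runs the entire decoupling argument in $L^p(\R^n)$. The exponent $p=n(n+1)$ is precisely the critical BDG exponent for curves in $\R^{n-m_1}$ (and all the sub-curves that appear later), which is what makes the numerology close. Your $L^2$ setup with ``scale-dependent exponent $p_k$'' and a conversion back to $L^2(d\mu)$ is not how the bookkeeping works here.

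Second, the induction scheme is different. The paper makes a single cut at frequency radius $\delta^{-1+\sqrt\epsilon}$. The low piece is handled by observing that convolution fattens each $T$ to a box at scale $\delta\rho^{-1}$ with $\rho=\delta^{\sqrt\epsilon}$, and then invoking the theorem itself at parameter $\widetilde\epsilon=\epsilon/(1-\sqrt\epsilon)$ and scale $\delta\rho^{-1}$; this is where the $O(\sqrt\epsilon)$ comes from. The high piece is handled \emph{directly} by decoupling, with no further induction. Your proposed recursion $\eta'\le C\sqrt{\epsilon\eta}+C\epsilon$ is not the mechanism.

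Most importantly, the frequency decomposition you sketch---annular regions at scales $\delta^{-k/m}$ ``reflecting the $k$-th order tangent geometry''---is not the right geometry. Every $\tau_\theta$ already reaches out to $|\xi|\simeq\delta^{-1}$; what matters is not $|\xi|$ but the expansion $\xi=\sum_\iota\lambda_\iota\bgamma^{(\iota)}(\theta)$ and which coefficients $\lambda_\iota$ are large. The paper's decomposition is an \emph{algorithm}: at each step it identifies the largest index $m_j\le m$ with $|\lambda_{m_j}|\simeq 1$, records a dyadic scale $s_j$ measuring the size of the remaining coefficients $\lambda_{m_j+1},\dots,\lambda_m$, applies BDG decoupling for a curve in $\R^{n_j-m_j}$ to localize $\theta$ to intervals of length $s_j$, and then anisotropically rescales. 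This produces tuples $(m_1,\dots,m_J)$, $(s_1,\dots,s_J)$ and dimension parameters $n_1\ge\dots\ge n_J$, and the final numerology closes via the chain of identities $\prod_{j'\le j}s_{j'}^{n_j-m_{j'}}=\delta$. A radial Littlewood--Paley decomposition does not see this structure and would not produce the right decoupling gains. Since you explicitly leave this step open, the proposal is a plan rather than a proof.
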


\begin{proof}[Proof of Theorem \ref{221013theorem1_1} by assuming Theorem \ref{221013theorem2_1}.]

Let $\alpha\in (0, n)$ and $\alpha^*\in (0, \min\{m, \alpha\})$. Let $\mu$ be a finite non-zero Borel measure supported on the unit ball in $\R^n$ with $c_{\alpha}(\mu)\le 1$. Let $\delta>0$ be a small number. Let $\Lambda_{\delta}$ be a $\delta$-net. For each $\theta\in \Lambda_{\delta}$, let $\D_{\theta}$ be a disjoint collection of at most $\mu(\R^n)\delta^{-\alpha^*}$ balls of radius $\delta$ in $\pim_{\theta}(\R^n)$, and let $\mu_{\theta}:=\big(\Pi_\theta^{(m)}\big)_*\mu$ be the pushforward measure on $\pim_{\theta}(\R^n)$. Then there exists $\varepsilon_0>0$, depending only on $\alpha$ and $\alpha^*$, such that 
\begin{equation}\label{220515e2_3}
    \delta \sum_{\theta\in \Lambda_{\delta}}  \mu_{\theta}\pnorm{\bigcup_{D\in \D_{\theta}}D} \lesim_{\alpha, \alpha^*} \mu(\R^n) \delta^{\varepsilon_0}, 
\end{equation}
where the implicit constant depends on $\alpha, \alpha^*$, but not on $\delta$. The proof of \eqref{220515e2_3} is the same as that of Corollary 2 in \cite{GGG22}, and is therefore left out. \\

Assume without loss of generality that $m\ge \dim(E) >0$. Let $\epsilon' >0$ be small and let $\alpha = \dim(E) - \epsilon'$. Using Frostman's lemma (see for instance \cite[page 112]{Mat95}), we can find  a nonzero, finite Borel measure $\mu$ on $E$ with $c_{\alpha}(\mu) \leq 1$. Suppose that $\Theta \subseteq [0, 1]$ is such that
\[ \dim (\pim_{\theta}(\supp(\mu))) \leq s := \alpha - 2\epsilon', \]
for every $\theta \in \Theta$. Let $\epsilon''>0$ be small. For each $\theta \in \Theta$, we apply \cite[Lemma 4.6]{Mat95} and find 
\begin{equation}
    \left\{ B\left(\pim_{\theta}(\bfx_j(\theta)), r_j(\theta) \right) \right\}_{j=1}^{\infty}, \ \ \bfx_j(\theta)\in \R^n,
\end{equation}
a covering of $\pim_{\theta} (\supp (\mu))$ by discs of dyadic radii smaller than $\epsilon''$, such that 
\begin{equation} \label{notubes} \sum_j r_j(\theta)^{s+\epsilon'} < 1. \end{equation}
For each $\theta \in \Theta$ and each $k \geq \left\lvert \log_2 \epsilon''\right\rvert$, let 
\begin{equation}
    \D_{\theta, k}:=\{B\left(\pim_{\theta}(\bfx_j(\theta)), r_j(\theta) \right): r_j(\theta)=2^{-k}\},
\end{equation}
and 
\begin{equation}
    \bfD_{\theta, k}:=\bigcup_{D\in \D_{\theta, k}} D. 
\end{equation}
Then for each $\theta \in \Theta$, it holds that 
\[\sum_{k \geq \left\lvert \log_2 \epsilon''\right\rvert} \mu_{\theta}(\bfD_{\theta, k})\gtrsim_{\mu} 1. \]
By Vitali's covering lemma, for each $\theta \in \Theta$ and each $k$, we can find a disjoint subcollection $\D'_{\theta, k}\subset \D_{\theta, k}$ such that 
\[ 1 \lesssim_{\mu} \sum_{k \geq \left\lvert \log_2 \epsilon''\right\rvert} \mu_{\theta}(\bfD''_{\theta, k}), \]
where 
\begin{equation}
    \bfD''_{\theta, k}:=\bigcup_{D'\in \D'_{\theta, k}} (10^n D'). 
\end{equation}
As a consequence, we obtain 
\[ \mathcal{H}^1(\Theta) \lesim_{\mu} \sum_{k \geq \left\lvert \log_2 \epsilon''\right\rvert} \int_{\Theta}  \mu_{\theta}(\bfD''_{\theta, k}) \, d\theta. \]
By \eqref{notubes}, for each $\theta$ and $k$ the set $\D'_{\theta, k}$ is the union of at most $2^{k(s+\epsilon')}$ disjoint discs of radius $2^{-k}$. By \eqref{220515e2_3} and the triangle inequality, we can find $\epsilon_0>0$ independent of $k$ and $\epsilon''$, such that
\begin{align*} \mathcal{H}^1(\Theta) 
&\lesssim   \sum_{k \geq \left\lvert \log_2 \epsilon''\right\rvert} 2^{-k\epsilon_0}. \end{align*} 
Letting $\epsilon'' \to 0$ gives $\mathcal{H}^1(\Theta) = 0$. Hence 
\[ \dim (\pim_{\theta}(E)) \geq  \dim (\pim_{\theta}(\supp(\mu))) \geq \alpha - 2\epsilon'\]  
for almost every $\theta \in [0, 1]$. The theorem follows by letting $\epsilon' \to 0$ along a countable sequence.\end{proof}

In the rest of the paper we will prove Theorem \ref{221013theorem2_1}. The proof is via an induction on $\epsilon$. The base case $\epsilon\simeq 1$ of the induction is trivial. To prove Theorem \ref{221013theorem2_1} for $\epsilon$, let us assume that we have proven it for $\widetilde{\epsilon}=\frac{\epsilon}{1-\sqrt{\epsilon}}$. \\

For each $T\in \T_{\theta}$, let $\phi_T: \R^n\to \R$ be an $L^{\infty}$ normalized non-negative smooth bump function associated to $T$ with $\widehat{\phi}_T$ supported on $\tau_{\theta}$ and 
\begin{equation}
    \phi_T(\bfx)\gtrsim 1,  \ \ \forall \bfx\in T. 
\end{equation}
Note that by the assumption \eqref{221014e2_4}, we have 
\begin{equation}
    \sum_{T\in \W} \phi_T(\bfx)\gtrsim \delta^{-1+\epsilon}, \ \ \forall \bfx\in \supp(\mu). 
\end{equation}
Let $\psi_{0, \delta}: \R^n\to \R$ be a smooth bump function that is supported on $B(0, 2\delta^{-1+\sqrt{\epsilon}})\subset \R^n$ and equals 1 on $B(0, \delta^{-1+\sqrt{\epsilon}})\subset \R^n$. We write 
\begin{equation}
    \widehat{\phi}_T=\widehat{\phi}_T\cdot \psi_{0, \delta}+\widehat{\phi}_T\cdot (1-\psi_{0, \delta}). 
\end{equation}
The former term is called the low frequency part of $\phi_T$, and the latter term is called the high frequency part of $\phi_T$. We can find a Borel set $F$ satisfying $\mu(F)\gtrsim \mu(\R^n)$ such that either 
\begin{equation}\label{221030e2_16}
    \delta^{-1+\epsilon}\lesim \anorm{
    \sum_{T\in \W} \pnorm{\phi_T* \widecheck{\psi}_{0, \delta}}(\bfx)
    }, \ \ \forall \bfx\in F,
\end{equation}
or 
\begin{equation}\label{221030e2_17}
    \delta^{-1+\epsilon}\lesim \anorm{
    \sum_{T\in \W} \pnorm{\phi_T* 
    (1-\psi_{0, \delta})^{\lor}
    }(\bfx)
    }, \ \ \forall \bfx\in F,
\end{equation}

Let us assume we are in the former case. For each $T\in \W$, the function $\phi_T* \widecheck{\psi_{0, \delta}}$ is essentially supported on a rectangular box of dimensions 
\begin{equation}
    \underbrace{(\rho^{-1}\delta)\times \dots\times (\rho^{-1}\delta)}_{m \ \mathrm{copies}}\times \underbrace{1\times \dots\times 1}_{(n-m) \mathrm{ copies}}
\end{equation}
with $\rho:=\delta^{\sqrt{\epsilon}}$; we use $\widetilde{T}$ to denote this rectangular box. We define a relation. Take $\widetilde{T}_1 $ and $\widetilde{T}_2 $. We say that $\widetilde{T}_1 \sim \widetilde{T}_2 $ if $\widetilde{T}_2 \subset 10^n \widetilde{T}_1 $ or the other way around. We let $\widetilde{W}$ be a collection of the enlarged boxes $\widetilde{T}$ satisfying that 
\begin{equation}
    \widetilde{T}_1\not\sim \widetilde{T}_2, \ \ \forall \widetilde{T}_1, \widetilde{T}_2\in \widetilde{W}, \widetilde{T}_1\neq \widetilde{T}_2,
\end{equation}
and for every $T\in W$, there exists $T'$ with $\widetilde{T'}\in \widetilde{W}$ such that $\widetilde{T}\sim \widetilde{T'}$. Write $\widetilde{W}$ as a disjoint union 
\begin{equation}
\widetilde{W}=\wheavy\bigsqcup \wlight,
\end{equation}
where we use $\wlight$ to collect the enlarged boxes $\widetilde{T}$ that contains $\le C^{-1} \rho^{-m-1+\sqrt{\epsilon}}$ many boxes from $\W$. Here $C$ is a large universal constant that is much larger than the implicit universal constant in \eqref{221030e2_16}. Note that 
\begin{equation}
    \Norm{
    \sum_{\widetilde{T}\in \wlight}
    \sum_{T'\in \W: T'\subset \widetilde{T}}
    \phi_{T'}* \widecheck{\psi}_{\delta}
    }_{\infty}
    \le 
    \sum_{\widetilde{T}\in \wlight}
    \sum_{T'\in \W: T'\subset \widetilde{T}}
    \Norm{
    \phi_{T'}* \widecheck{\psi}_{\delta}
    }_{\infty}
    \ll (\delta^{-1}\rho) \rho^m \rho^{-m-1+\sqrt{\epsilon}}.
\end{equation}
As a consequence, we see that 
\begin{equation}
    \delta^{\epsilon-1}\lesim \anorm{
    \sum_{\widetilde{T}\in \wheavy}
    \sum_{T'\in \W: T'\subset \widetilde{T}}
    \phi_{T'}* \widecheck{\psi}_{\delta}(\bfx)
    }, \ \ \forall \bfx\in F. 
\end{equation}
Note that each $\widetilde{T}$ contains at most $\rho^{-m-1}$ many boxes from $\W$. We can conclude that 
\begin{equation}
    \sum_{\widetilde{T}\in \wheavy} \chi_{\widetilde{T}}(\bfx)\gtrsim 
    \delta^{\epsilon-1}\rho, \ \ \forall \bfx\in F. 
\end{equation}
Write 
\begin{equation}
    \delta^{-1+\epsilon}\rho=(\delta \rho^{-1})^{-1+\widetilde{\epsilon}}, \ \ \widetilde{\epsilon}=\frac{\epsilon}{1-\sqrt{\epsilon}}.
\end{equation}
The boxes in $\wheavy$ satisfy the induction hypothesis at the scale $(\delta \rho^{-1})$ with the new parameter $\widetilde{\epsilon}$. Hence 
\begin{equation}
    |\W|\gtrsim |\wheavy| \rho^{-m-1+\sqrt{\epsilon}}
    \gtrsim 
    \mu(\R^n) c_{\alpha}(\mu)^{-1} (\delta^{-1}\rho)^{\alpha+1-10^{10n}\sqrt{\widetilde{\epsilon}}} \rho^{-m-1+\sqrt{\epsilon}}.
\end{equation}
Elementary computations yield that 
\begin{equation}
    |\W|\gtrsim \mu(\R^n) c_{\alpha}(\mu)^{-1} 
    \delta^{-(\alpha+1-10^{10n}\sqrt{\epsilon})}
    \rho^{-m+\alpha}
    \delta^{10^{10n}\frac{3\epsilon}{4}-10^{10n}\epsilon+\epsilon}.
\end{equation}
Since $\alpha\le m$, the induction closes. \\

We turn to the case \eqref{221030e2_17}. This case will be handled directly, without using induction. Let $p:=n(n+1)$. We raise both sides to the $p$-th power, integrate with respect to $d\mu$, replace $d\mu$ by the Lebesgue measure and obtain 
\begin{equation}\label{221014e2_10}
    \mu(\R^n) c_{\alpha}(\mu)^{-1} \delta^{-p+p\epsilon}\delta^{n-\alpha} \lesim \int \anorm{
    \sum_T \phi_T*(1-\psi_{0, \delta})^{\lor}
    }^p.
\end{equation}
Here we used the definition of $c_{\alpha}(\mu)$ in \eqref{240120edefi} and the fact that the integrated function is essentially constant on $\delta$-balls. By decomposing the support of $1-\psi_{0, \delta}$ into dyadic annuli and applying the triangle inequality, we can without loss of generality assume that 
\begin{equation}\label{221014e2_10zz}
    \mu(\R^n) c_{\alpha}(\mu)^{-1} |\log \delta|^{-p} \delta^{-p+p\epsilon}\delta^{n-\alpha} \lesim  \int \anorm{
    \sum_T \phi_T*\widecheck{\psi_{1, \delta}}
    }^p,
\end{equation}
where $\psi_{1, \delta}$ is an $L^{\infty}$-normalized bump function supported on $\{\bxi\in \R^n: |\bxi|\simeq \delta^{-1}\}$.  
To simplify further notation, we will rescale the right hand side so that frequency variables take values on a compact set $B(0, C)$ for some constant $C\simeq 1$. More precisely, 
\begin{equation}\label{221014e2_11}
    \begin{split}
        \eqref{221014e2_10zz} & \lesim \int \anorm{
        \sum_T \int \widehat{\phi}_T(\bxi)\psi_{1, \delta}(\bxi)e^{i\bfx\cdot \bxi}d\bxi
        }^p d\bfx\\
        & \lesim  \delta^{-pn+n} \int \anorm{
        \sum_T \int \widehat{\phi}_T(\delta^{-1} \bxi)\psi_{1,  \delta}(\delta^{-1}\bxi)e^{i\bfx\cdot \bxi}d\bxi
        }^p d\bfx
    \end{split}
\end{equation}
Define
\begin{equation}\label{221014e2_12}
    \phicirc_T(\bfx):=\phi_T(\delta \bfx), \ \ \psi_{1}(\bxi):=\psi_{1, \delta}(\delta^{-1}\bxi),
\end{equation}
and
\begin{equation}
    \widehat{\phiplus_T}(\bxi):=
    \widehat{\phicirc_T}(\bxi) \psi_{1}(\bxi).
\end{equation}
Under the new notation, \eqref{221014e2_11} can be written as 
\begin{equation}
    \eqref{221014e2_10zz}\lesim \delta^n \int 
    \anorm{
    \sum_T \int 
    \widehat{\phiplus_T}(\bxi)
    e^{i\bfx\cdot \bxi}d\bxi
    }^p d\bfx.
\end{equation}
It therefore remains to prove
\begin{equation}\label{221027e2_16ppp}
    \int 
    \anorm{
    \sum_T \int 
    \widehat{\phiplus_T}(\bxi)
    e^{i\bfx\cdot \bxi}d\bxi
    }^p d\bfx \lesim_{\epsilon, p, n, \alpha} \delta^{-p+1} \delta^{-\epsilon} |\W|.
\end{equation}
Similarly to \eqref{221014e2_12}, define
\begin{equation}
    \tcirc:=\delta\cdot T, \ \ \taucirc:=\delta^{-1}\cdot\tau.
\end{equation}
After the above rescaling, we see that for $\bxi\in \supp(\widehat{\phiplus_T})$ with $T\in \T_{\theta}$, it can be written as 
\begin{equation}
    \lambda_1 \bgamma'(\theta)+\dots +\lambda_n \bgamma^{(n)}_n(\theta),
\end{equation}
with 
\begin{equation}\label{221018e2_17}
    |\lambda_{\iota}|\le 1, |\lambda_{\iota'}|\le \delta, \ \ 1\le \iota\le m, m+1\le \iota'\le n.
\end{equation}
and 
\begin{equation}\label{221018e2_18}
    \sum_{\iota} |\lambda_{\iota}|\simeq  1.
\end{equation}
The support of $\widehat{\phi}^+_T$ will be denoted by $\Omega^+_T$. In the next few sections, we will decompose $\Omega^+_T$ into smaller pieces and apply decoupling inequalities. Roughly speaking, the decomposition is guided by the scales at which decoupling inequalities in \cite{BDG16} and \cite{BGH21} can be applied. For instance, the first step of the decomposition (see \eqref{240120decomposition} below) is to decide the largest $\iota\in \{1, 2, \dots, m\}$ in \eqref{221018e2_18} satisfying $|\lambda_{\iota}|\simeq 1$. We use $m_1$ to denote this largest $\iota$ (see \eqref{221112e3_1} below). Once $m_1$ is decided, we will not decompose the direction corresponding to $\lambda_{m_1}$. Moreover, it is key to observe here that all the directions corresponding to $\lambda_{\iota''}$ with $\iota''< m_1$ do not need to be decomposed either. \footnote{Reflected in the definition in \eqref{221112e3_1}, what we do is to put all frequencies $|\lambda_{\iota''}|\lesim1 $ together, whenever $\iota''< m_1$.} The second step of the decomposition is guided by the scales that appear in decoupling inequalities (see \eqref{220901e3_4}). After these two steps, we will apply decoupling inequalities in \cite{BDG16} and \cite{BGH21} (see Claim \ref{221027claim3_1}). For each decoupled frequency regions, we will repeat the whole process again.

\section{Frequency decomposition and decoupling inequalities: Step One.}

For $m_1\le m$, denote 
\begin{equation}\label{221112e3_1}
    \Omega^+_{T,  m_1}:=\{\sum_{\iota=1}^n \lambda_{\iota} \bgamma^{(\iota)}(\theta): T\in \W_{\theta}, |\lambda_{\iota''}|\lesim 1, \forall \iota''<m_1,   |\lambda_{m_1}|\simeq 1, |\lambda_{\iota'}|\ll 1, \ \forall m_1<\iota'\le m\}.
\end{equation}
To slightly simplify future notation, we make the convention that whenever we have a sum as in \eqref{221112e3_1} that runs over all $1\le \iota\le n$, we will simply neglect the range of $\iota$; moreover, if $\lambda_{\iota}$ takes values in the largest possible range, that is, $|\lambda_{\iota}|\lesim 1$, then we would not specify the range of $\lambda_{\iota}$.   We have
\begin{equation}\label{240120decomposition}
    \Omega^+_T=\bigcup_{m_1\le m}
    \Omega^+_{T, m_1}.
\end{equation}
Define $\phiplus_{T, m_1}$ to be a frequency projection of $\phiplus_T$ to $\Omega^+_{T, m_1}$ so that 
\begin{equation}
    \phiplus_T=\sum_{m_1} \phiplus_{T, m_1}.
\end{equation}
We further decompose $\Omega^+_{T, m_1}$. We discuss the case $m_1>m$ and the case $m_1=m$ separately. 
For $m_1>m$ and positive dyadic numbers $s_1$ with 
\begin{equation}
    1\gg s_1\ge  \delta^{\frac{1}{n-m_1}},
\end{equation}
denote 
\begin{equation}\label{220901e3_4}
    \Omega^+_{T, m_1, s_1}:=\{\sum_{\iota} \lambda_{\iota} \bgamma^{(\iota)}(\theta)\in \Omega^+_{T, m_1}: s_1^{-1} |\lambda_{m_1+1}|+\dots+ s_1^{-(m-m_1)}|\lambda_m|\simeq 1\},
\end{equation}
with a modification that $\simeq$ is replaced by $\lesim$ when $s_1=\delta^{\frac{1}{n-m_1}}$. If we are in the case $m_1=m$, then $s_1$ takes only one value $\delta^{\frac{1}{n-m_1}}$, and we denote 
\begin{equation}
\Omega^+_{T, m_1, s_1}:=
\Omega^+_{T, m_1}.
\end{equation}
In other words, in the case $m_1=m$, we do not decompose $\Omega^+_{T, m_1}$, and will apply decoupling inequalities directly (Claim \ref{221027claim3_1} below);  the reason of introducing $s_1$ in this case is that $s_1$ will tell us at what scale decoupling inequalities will be applied. \\

By the decomposition in \eqref{220901e3_4}, we  have 
\begin{equation}
    \Omega^+_{T, m_1}=\bigcup_{s_1} \Omega^+_{T, m_1, s_1}. 
\end{equation}
We define $\phiplus_{T, m_1, s_1}$ to be a frequency projection of $\phiplus_{T, m_1}$ to the region $\Omega^+_{T, m_1, s_1}$ so that \begin{equation}
    \phiplus_{T, m_1}=\sum_{s_1} \phiplus_{T, m_1, s_1}. 
\end{equation}
\begin{claim}\label{221027claim3_1}
Let $\W_{\theta}\subset \T_{\theta}$. We have
\begin{equation}\label{221017e3_8}
    \begin{split}
        & \Norm{
        \sum_{\theta\in \Lambda_{\delta}} 
        \sum_{T\in \W_{\theta}}
        \phiplus_{T, m_1, s_1}
        }_p \lessapprox \pnorm{\frac{1}{s_1}}^{1-
        \frac{\mathfrak{D}_{n-m_1}}{p}
        } 
        \pnorm{
        \sum_{\ell(\Theta_1)=s_1}
        \Norm{
        \sum_{\theta\in \Lambda_{\delta}\cap \Theta_1}
        \sum_{T\in \W_{\theta}}
        \phiplus_{T, m_1, s_1}
        }_p^p
        }^{1/p}.
    \end{split}
\end{equation}
Here and below, $\lessapprox$ means we are allowed to lose $\delta^{-\varepsilon}$ and $\varepsilon$ can be chosen to be arbitrarily small, and 
\begin{equation}
    \mathfrak{D}_{\iota}:=\frac{\iota(\iota+1)+2}{2}.
\end{equation}
\end{claim}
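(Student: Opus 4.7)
The plan is to prove the claim as an $\ell^p L^p$ decoupling of the outer $\theta$-sum at scale $s_1$. For each $s_1$-interval $\Theta_1 \subset [-1,1]$ centered at $\theta_0$, set
\[
F_{\Theta_1} := \sum_{\theta \in \Lambda_\delta \cap \Theta_1} \sum_{T \in \W_\theta} \phiplus_{T, m_1, s_1}.
\]
The first step is to pin down the Fourier support of $F_{\Theta_1}$. I Taylor-expand each $\bgamma^{(\iota)}(\theta) = \sum_k \frac{(\theta-\theta_0)^k}{k!}\bgamma^{(\iota+k)}(\theta_0)$ (using the convention $\bgamma^{(j)}\equiv 0$ for $j>n$), substitute into a generic frequency point $\sum_\iota \lambda_\iota \bgamma^{(\iota)}(\theta) \in \Omega^+_{T, m_1, s_1}$, and regroup in the basis $\{\bgamma^{(\iota)}(\theta_0)\}_\iota$ to produce new coefficients $\tilde\lambda_\iota$. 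A direct computation using $|\theta-\theta_0|\lesim s_1$, the $\lambda_\iota$-bounds in \eqref{221018e2_17} and \eqref{220901e3_4}, together with the standing assumption $s_1 \ge \delta^{1/(n-m_1)}$ (which gives $\delta \le s_1^{\iota-m_1}$ for all $m_1 < \iota \le n$), yields $|\tilde\lambda_\iota|\lesim 1$ for $\iota \le m_1$ and $|\tilde\lambda_\iota|\lesim s_1^{\iota-m_1}$ for $\iota > m_1$. Hence every $F_{\Theta_1}$ has Fourier support in a common rectangular slab $U_{\Theta_1}$ whose sides in the moving basis at $\theta_0$ have lengths $\underbrace{1\times\cdots\times 1}_{m_1 \text{ times}} \times s_1 \times s_1^2 \times \cdots \times s_1^{n-m_1}$.

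Next, I will interpret $\{U_{\Theta_1}\}$ as canonical $s_1$-caps of the curve $\theta_0 \mapsto \bgamma^{(m_1)}(\theta_0)$. In the monomial coordinates this curve is $(0,\ldots,0,1,\theta_0,\theta_0^2/2!,\ldots,\theta_0^{n-m_1}/(n-m_1)!)$, confined to the affine subspace $\{\xi_{m_1}=1,\ \xi_j=0 \text{ for } j<m_1\}\simeq\R^{n-m_1}$, on which it is the moment curve of degree $d:=n-m_1$. The remaining $m_1$ directions $\bgamma^{(1)}(\theta_0),\ldots,\bgamma^{(m_1)}(\theta_0)$, in which the slab has size $O(1)$ independent of $s_1$, enter purely cylindrically. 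I then apply the Bourgain--Demeter--Guth $\ell^2 L^{p_d}$ decoupling for the moment curve in $\R^d$ at its critical exponent $p_d:=d(d+1)$, with a standard cylindrical (Fubini/Minkowski) extension absorbing those $m_1$ flat directions, to obtain
\[
\Bigl\| \sum_{\Theta_1} F_{\Theta_1} \Bigr\|_{p_d} \lessapprox \Bigl(\sum_{\Theta_1}\|F_{\Theta_1}\|_{p_d}^2\Bigr)^{1/2}.
\]

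Finally, since the paper applies the claim at $p=n(n+1) \ge d(d+1)=p_d$, I will upgrade to arbitrary $p \ge p_d$ by interpolating the critical $\ell^2 L^{p_d}$ bound above with the trivial Cauchy--Schwarz $\ell^2 L^\infty$ bound $\|\sum F_{\Theta_1}\|_\infty \le (\#\Theta_1)^{1/2}(\sum\|F_{\Theta_1}\|_\infty^2)^{1/2}$, producing an $\ell^2 L^p$ decoupling with loss $(1/s_1)^{1/2 - p_d/(2p)}$. Converting $\ell^2$ to $\ell^p$ via H\"older on the right-hand side costs an additional $(1/s_1)^{1/2 - 1/p}$, and multiplying the two factors gives total loss
\[
(1/s_1)^{(1/2-p_d/(2p))+(1/2-1/p)} = (1/s_1)^{1 - (p_d+2)/(2p)} = (1/s_1)^{1 - \mathfrak{D}_{n-m_1}/p},
\]
which is exactly the claimed exponent. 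The main technical hurdle will be justifying the cylindrical reduction rigorously, since the moving basis $\bgamma^{(\iota)}(\theta_0)$ varies across different $\Theta_1$'s; I expect to handle this by working with the global monomial parametrization so that the basis change $\bgamma^{(\iota)}(\theta_0) \leftrightarrow $ standard basis is an explicit polynomial diffeomorphism, under which both the slab shape above and the non-degeneracy of the underlying moment-curve structure are manifestly preserved.
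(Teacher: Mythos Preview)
Your identification of the Fourier support of $F_{\Theta_1}$ as a box of dimensions $1\times\cdots\times 1\times s_1\times\cdots\times s_1^{\,n-m_1}$ in the moving frame $\{\bgamma^{(\iota)}(\theta_0)\}$ is correct, and so is the exponent arithmetic at the end. The gap is precisely the step you flag yourself: the ``cylindrical reduction'' does not go through as a single application of Bourgain--Demeter--Guth plus Fubini. For that to work, the $m_1$ ``flat'' directions $\bgamma^{(1)}(\theta_0),\ldots,\bgamma^{(m_1)}(\theta_0)$ would have to span a subspace independent of $\theta_0$, so that a single linear change of coordinates turns every $U_{\Theta_1}$ into (fixed $m_1$-cube) $\times$ (moment-curve $s_1$-cap in $\R^{n-m_1}$). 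They do not: in the monomial model $\bgamma^{(\iota)}(\theta_0)=(0,\ldots,0,1,\theta_0,\theta_0^2/2!,\ldots)$ and the span moves with $\theta_0$. Concretely, in standard coordinates the $(m_1+j)$-th component of a generic point of $U_{\Theta_1}$ contains the term $\lambda_1\,\theta^{\,m_1+j-1}/(m_1+j-1)!$, which is of size $O(1)$ rather than $O(s_1^{\,j})$; so the projection of $U_{\Theta_1}$ onto the last $n-m_1$ coordinates is not a moment-curve cap at all, and Fubini in the first $m_1$ coordinates does not reduce you to BDG. Your proposed fix, that the basis change is an ``explicit polynomial diffeomorphism,'' does not help: that diffeomorphism depends on $\theta_0$, and a $\theta_0$-dependent change of variables cannot be applied on the Fourier side of the single function $\sum_{\Theta_1}F_{\Theta_1}$.

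The paper resolves exactly this obstruction by the Pramanik--Seeger bootstrapping referenced in \cite{PS07} and \cite{BGH21}. One first uses the triangle inequality to localize $\theta$ to a subinterval $\mathfrak I_1$ of length $s_1^{\varepsilon}$ \emph{and} to localize each of $\lambda_1,\ldots,\lambda_{m_1}$ to intervals of length $s_1^{\varepsilon}$ (this costs only $s_1^{-O(\varepsilon)}$, absorbed into $\lessapprox$). On such a piece the moving frame is frozen to within $O(s_1^{\varepsilon})$ and the Fourier support is provably within the anisotropic $s_1^{\varepsilon}$-neighbourhood of a genuine cylinder over the moment curve in $\R^{n-m_1}$; one then applies BDG once to decouple $\theta$ from scale $|\mathfrak I_1|$ down to $|\mathfrak I_1|\,s_1^{\varepsilon/(n-m_1)}$, and iterates $O((n-m_1)/\varepsilon)$ times to reach scale $s_1$. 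The cumulative constant is exactly $(1/s_1)^{1-\mathfrak D_{n-m_1}/p}$. Your interpolation-plus-H\"older route from the critical $\ell^2 L^{p_d}$ estimate to the $\ell^p L^p$ form is fine and equivalent to what the paper records, but it only kicks in after this iterative cylindrical approximation has been carried out.
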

\begin{proof}[Proof of Claim \ref{221027claim3_1}] The proof is essentially contained in \cite{BGH21}, and we only give a sketch of the proof. The proof uses the decoupling inequality in \cite{BDG16} and the bootstrapping argument in \cite{PS07}. To simplify notation, let us write 
\begin{equation}
    F_{\Theta_1}:=\sum_{\theta\in \Lambda_{\delta}\cap \Theta_1}
        \sum_{T\in \W_{\theta}}
        \phiplus_{T, m_1, s_1}.
\end{equation}
Note that, because $s_1\ge \delta^{\frac{1}{n-m_1}}$, the support of $\widehat{F_{\Theta_1}}$ is contained in 
\begin{equation}
    \{\sum_{\iota} \lambda_{\iota} \bgamma^{(\iota)}(\theta): \theta\in \Theta_1, |\lambda_{m_1}|\simeq 1, |\lambda_{m_1+1}|\lesim s_1, \dots, |\lambda_n|\lesim s_1^{n-m_1}\}=:\Omega_{\Theta_1}.
\end{equation}
The region $\Omega_{\theta_1}$ is essentially a rectangular box of dimensions 
\begin{equation}
    1\times \dots\times 1\times s_1\times \dots\times s_1^{n-m_1}.
\end{equation}
For an interval $\Theta$ with $|\Theta|\ge s_1$, we define 
\begin{equation}
    F_{\Theta}:=\sum_{\Theta_1\subset \Theta} F_{\Theta_1}. 
\end{equation}
Let $\varepsilon>0$ be a small number. By the triangle inequality, we can cut the region $\Omega_{\Theta_1}$ into smaller regions of dimensions 
\begin{equation}\label{221110e3_14}
    s_1^{\varepsilon}\times \dots\times s_1^{\varepsilon} \times s_1\times \dots\times s_1^{n-m_1},
\end{equation}
and restrict $\Theta_1$ to a small interval of length $s_1^{\varepsilon}$, say $[0, s_1^{\varepsilon}]$. Without loss of generality, let us assume that $\lambda_{m_1}\in [1, 1+s_1^{\varepsilon}]$. To avoid introducing new notation, we still use $\Omega_{\Theta_1}$ to denote a region of the scale \eqref{221110e3_14} with $\lambda_{m_1}\in [1, 1+s_1^{\varepsilon}]$, and still use the notation $F_{\Theta_1}$ and $F_{\Theta}$. We need to prove 
\begin{equation}\label{201110e3_15}
    \norm{
    F_{\mathfrak{I}_1}
    }_p 
    \lessapprox
    \pnorm{
    \frac{1}{s_1}
    }^{1-\frac{\mathfrak{D}_{n-m_1}}{p}}
    \pnorm{
    \sum_{\Theta_1\subset \mathfrak{I}_1}
    \norm{
    F_{\Theta_1}
    }_p^p
    }^{1/p},
\end{equation}
where $\mathfrak{I}_1:=[0, s_1^{\varepsilon}]$. To prove \eqref{201110e3_15}, first observe that the Fourier transform of $F_{\mathfrak{I}_1}$ is ``close" to the cylinder 
\begin{equation}\label{221110e3_16}
    \bigg\{\lambda'_1 \vec{e}_1+\dots+\lambda'_{m_1} \vec{e}_{m_1}+\Big(0, \dots, 0, \frac{\theta}{1!}, \dots, \frac{\theta^{n-m_1}}{(n-m_1)!}\Big):
    \begin{array}{l}
         |\lambda'_1-a_1|, \dots, |\lambda'_{m_1-1}-a_{m_1-1}|\le s_1^\e,  \\
         \lambda'_{m_1}\in [1, 1+s_1^{\varepsilon}], \theta\in \mathfrak{I}_1 
    \end{array}\bigg\}.
\end{equation}
Here $\vec{e}_j$ are coordinate vectors, and $a_1, \dots, a_{m_1-1}$ are numbers lying in $[-1,1]$. Let us be more precise. A point from $\Omega_{\Theta_1}$ can be written as
\begin{equation}
    \sum_{\iota=1}^{m_1-1} \lambda_{\iota} \bgamma^{(\iota)}(\theta)+
    \lambda_{m_1} \bgamma^{(m_1)}(\theta)
    +\sum_{\iota=m_1+1}^{n} \lambda_{\iota} \bgamma^{(\iota)}(\theta)=:\bfQ.
\end{equation}
Let us compute the distance of this point to the following point on the cylinder \eqref{221110e3_16}: 
\begin{equation}
    \bfQ':=
    \pnorm{
    \lambda_1, \lambda_2+\lambda_1\theta, \dots, \lambda_{m_1}+\dots+\lambda_1 \frac{\theta^{m_1-1}}{(m_1-1)!}, 
    \lambda_{m_1}\theta, \frac{(\lambda_{m_1}\theta)^2}{2!}, \dots, 
    \frac{(\lambda_{m_1}\theta)^{n-m_1}}{(n-m_1)!}
    }.
\end{equation}
Write 
\begin{equation}
    \bfQ=(Q_1, \dots, Q_n), \ \ \bfQ'=(Q'_1, \dots, Q'_n).
\end{equation}
We have that 
\begin{equation}
    |Q_{\iota}-Q'_{\iota}|=0, \forall 1\le \iota\le m_1,
\end{equation}
and 
\begin{equation}
    |Q_{\iota}-Q'_{\iota}|\lesim s_1^{\varepsilon(\iota-m_1+1)}, \forall \iota\ge m_1+1.
\end{equation}
By an anisotropic rescaling in the frequency variables of $F_{\mathfrak{I}_1}$ and applying the decoupling inequality\footnote{Decoupling inequalities in \cite{BDG16} are stated for the moment curve, but the proof there works for general non-degenerate curves. One can also use the bootstrapping argument in \cite{PS07}, as is done in Lemma 3.6 in \cite{GLYZ21}.} in \cite{BDG16} to the cylinder \eqref{221110e3_16}, we obtain 
\begin{equation}
    \norm{F_{\mathfrak{I}_1}
    }_p \lesssim 
    \pnorm{\frac{1}{\delta}}^{\varepsilon^2}
    \pnorm{
    \frac{1}{s_1^{\frac{\varepsilon}{n-m_1}}}
    }^{1-
    \frac{\mathfrak{D}_{n-m_1}}{p}
    }
    \pnorm{
    \sum_{\mathfrak{I}_2\subset \mathfrak{I}_2, |\mathfrak{I}_2|=|\mathfrak{I}_1| s_1^{\frac{\varepsilon}{n-m_1}}}
    \norm{F_{\mathfrak{I}_2}}_p^p
    }^{\frac{1}{p}}.
\end{equation}
Note that this decoupling inequality can be iterated, in an essentially the same way as done above, for each resulting term $\norm{F_{\mathfrak{I}_2}}_p$. After $(n-m_1)/\varepsilon$ many steps, we will eventually obtain the desired estimate. 
\end{proof}

After applying decoupling, we apply rescaling to 
\begin{equation}
    \sum_{\theta\in \Lambda_{\delta}\cap \Theta_1}
        \sum_{T\in \W_{\theta}}
        \phiplus_{T, m_1, s_1}.
\end{equation}
Without loss of generality, we assume that we are working with $\Theta_1=[0, s_1]$. 
On the frequency side, we do the scaling
\begin{equation}\label{220901e3_13}
    \xi_1\mapsto \eta_1, \ \dots, \ \xi_{m_1}\mapsto \eta_{m_1}, \ \xi_{m_1+1}\mapsto s_1 \eta_{m_1+1}, \dots, \xi_n\mapsto s_1^{n-m_1}\eta_n.
\end{equation}
For each $T\in \W_{\theta}, \theta\in \Lambda_{\delta}\cap \Theta_1$, we study the support of the function 
\begin{equation}
    \widehat{\phi}^{+}_{T_{s_1}}(\bfeta):=
    \widehat{\phi}^{+}_{T, m_1, s_1}(\eta_1, \dots, \eta_{m_1}, s_1 \eta_{m_1+1}, \dots, s_1^{n-m_1} \eta_n).
\end{equation}
Note that a frequency point under consideration can be written as 
\begin{equation}
    \begin{split}
        &\bxi=\lambda_1 \bgamma'(\theta)+\dots+\lambda_{m_1} \bgamma^{(m_1)}(\theta)+
        \lambda_{m_1+1} \bgamma^{(m_1+1)}(\theta)
        +\dots \lambda_n \bgamma^{(n)}(\theta),
    \end{split}
\end{equation}
with 
\begin{equation}
    \frac{|\lambda_{m_1+1}|}{s_1}+\dots+\frac{|\lambda_m|}{s_1^{m-m_1}}\simeq 1,
\end{equation}
and $\simeq $ is replaced by $\lesim $ when $s_1=\delta^{\frac{1}{n-m_1}}$. Under the above change of variables, it becomes 
\begin{equation}\label{220901e3_14}
    \begin{split}
    \pnorm{
    & 
    \sum_{\iota} \lambda_{\iota} \frac{\theta^{1-\iota}}{(1-\iota)!},
    \dots, 
    \sum_{\iota} \lambda_{\iota} \frac{\theta^{m_1-\iota}}{(m_1-\iota)!}, \\
    & 
    s_1^{-1}
    \sum_{\iota} \lambda_{\iota} \frac{\theta^{m_1+1-\iota}}{(m_1+1-\iota)!}, 
    \dots, 
    s_1^{-(n-m_1)}
    \sum_{\iota} \lambda_{\iota} \frac{\theta^{n-\iota}}{(n-\iota)!}
    }=:\bfeta
    \end{split}
\end{equation}
We rename variables 
\begin{equation}
    \frac{\lambda_{m_1+1}}{s_1}\mapsto \lambda_{m_1+1}, \dots, \frac{\lambda_n}{s_1^{n-m_1}}\mapsto \lambda_n,
\end{equation}
and write 
\begin{equation}\label{220901e3_17}
    \begin{split}
    \bfeta=\pnorm{
    & 
    \sum_{\iota} \lambda_{\iota} \frac{\theta^{1-\iota}}{(1-\iota)!},
    \dots, 
    \sum_{\iota} \lambda_{\iota} \frac{\theta^{m_1-\iota}}{(m_1-\iota)!}, \\
    & 
    s_1^{-1}
    \sum_{\iota=1}^{m_1} \lambda_{\iota} \frac{\theta^{m_1+1-\iota}}{(m_1+1-\iota)!}+\sum_{\iota=m_1+1}^n s_1^{\iota-m_1-1} \lambda_{\iota} \frac{\theta^{m_1+1-\iota}}{(m_1+1-\iota)!}, \\
    & \dots, s_1^{-(n-m_1)}
    \sum_{\iota=1}^{m_1} \lambda_{\iota} \frac{\theta^{n-\iota}}{(n-\iota)!}+\sum_{\iota=m_1+1}^n s_1^{\iota-n} \lambda_{\iota} \frac{\theta^{n-\iota}}{(n-\iota)!}}
    \end{split}
\end{equation}
In the end, we rename $\theta\mapsto s_1 \theta$, and write 
\begin{equation}\label{220901e3_18}
    \begin{split}
    \bfeta=\lambda_1 \frac{\bgamma'_{m_1, s_1}(\theta)}{s_1^{1-1}}+\dots+\lambda_{m_1}\frac{\bgamma_{m_1, s_1}^{(m_1)}(\theta)}{s_1^{m_1-1}} +\lambda_{m_1+1}\frac{\bgamma_{m_1, s_1}^{(m_1+1)}(\theta)}{s_1^{m_1-1}}+\dots+
    \lambda_n \frac{\bgamma_{m_1, s_1}^{(n)}(\theta)}{s_1^{m_1-1}}
    \end{split}
\end{equation}
where
\begin{equation}
    \bgamma_{m_1, s_1}(\theta):=
    (\mathcal{R}_{m_1, s_1}\circ \bgamma)(\theta),
\end{equation}
\begin{equation}
    (\mathcal{R}_{m_1, s_1}\circ \bgamma)(\theta):=\pnorm{s_1^{1-1}
    \frac{\theta}{1!}, \dots, s_1^{k-1}\frac{\theta^k}{k!},\dots, s_1^{m_1-1}\frac{\theta^{m_1}}{m_1!}, 
    s_1^{m_1-1}\frac{\theta^{m_1+1}}{(m_1+1)!}, \dots, s_1^{m_1-1}\frac{\theta^{n}}{n!}
    },
\end{equation}
and 
\begin{align}\label{220902e3_24}
        & |\lambda_1|, \dots, |\lambda_{m_1-1}| \lesim 1, \ \  |\lambda_{m_1}|\simeq 1, \\ & |\lambda_{m_1+1}|+\dots+|\lambda_m|\simeq 1,\label{221024e3_22aa}\\
        & |\lambda_{m+1}|\lesim s_1^{-(m+1-m_1)}\delta, \dots, |\lambda_n|\lesim s_1^{-(n-m_1)}\delta, 
\end{align}
with $\simeq$ in \eqref{221024e3_22aa} replaced by $\lesim$ when $s_1^{n-m_1}=\delta$. Moreover, if for $\blambda=(\lambda_1, \dots, \lambda_n)$, we define 
\begin{equation}
    \mathcal{D}_{m_1, s_1}(\blambda):=\pnorm{
    \frac{\lambda_1}{s_1^{1-1}}, \dots, \frac{\lambda_{m_1}}{s_1^{m_1-1}}, \frac{\lambda_{m_1+1}}{s_1^{m_1-1}}, \dots, \frac{\lambda_n}{s^{m_1-1}}
    }, 
\end{equation}
then \eqref{220901e3_18} can be simplified to 
\begin{equation}
    \bfeta=\mathcal{D}_{m_1, s_1}(\blambda)\cdot \pnorm{
    \bgamma'_{m_1, s_1}(\theta), \dots, \bgamma_{m_1, s_1}^{(n)}(\theta)
    }.
\end{equation}
The expression \eqref{220901e3_18} looks complicated. However, the most important terms there are the ones corresponding to coefficients $\lambda_{m_1}, \dots, \lambda_n$, and they can be written as 
\begin{equation}
    \lambda_{m_1} \bgamma^{(m_1)}(\theta)+\dots+ \lambda_n \bgamma^{(n)}(\theta). 
\end{equation}
So far we have done a change of variables so that 
\begin{equation}\label{221024e3_22}
        \Norm{
        \sum_{\theta\in \Lambda_{\delta}\cap \Theta_1}
        \sum_{T\in \W_{\theta}}
        \phi^{+}_{T, m_1, s_1}
        }_p\lesim \jac(s_1)\cdot 
        \Norm{
        \sum_{\theta\in \Lambda_{s_1^{-1}\delta}}
        \sum_{
        T\in 
        \W_{\theta}(s_1; \Theta_1)
        }
        \phi^{+}_{T_{s_1}}
        }_p
\end{equation}
where $\W_{\theta}(s_1; \Theta_1)$ is what $\W_{\theta}$ is transformed to after renaming parameters $\theta\mapsto s_1\theta$, 
\begin{equation}\label{221018e3_22}
    \jac(s_1):=s_1^{(1-\frac{1}{p})(1+2+\dots+(n-m_1))},
\end{equation}
 and the support of $\widehat{\phi}^+_{T_{s_1}}$ is denoted as $\Omega^+_{T_{s_1}}$, that is, $\Omega^+_{T_{s_1}}$ collects all the $\bfeta$ satisfying \eqref{220901e3_18}. The form of the Jacobian factor in \eqref{221018e3_22} is not important, as we will revert the change of variables done above, and the Jacobian factor will be cancelled out. \\

Before we proceed to the next step, we record a few pieces of information from this step. Denote 
\begin{equation}
   \begin{split}
       &  \bfm_1=(m_1), \ \ \bfs_1=(s_1), \ \  \thick_1=(s_1^{-(m+1-m_1)}\delta, \dots, s_1^{-(n-m_1)}\delta),\\
       & 
   \end{split}
\end{equation}
and $n_1:=n$. Moreover, we record the relation $s_1^{-(n-m_1)}\le \delta^{-1}$. 

\section{Frequency decomposition and decoupling inequalities: General steps}

Let $j\ge 1$ be an integer. Suppose we have finished the $j$-th step of the frequency decomposition. So far we have collected $j$-tuples of integers $\bfm_j=(m_1, \dots, m_j)$ satisfying
\begin{equation}
    m_1\le m_2\le \dots\le m_j,
\end{equation}
$j$-tuples of dyadic numbers $\bfs_j=(s_1, \dots, s_j)$,  dimension parameters $n_j\in \{m, m+1, \dots, n\}$, an $(n-m)$-tuple
\begin{equation}
    \thick_j=\pnorm{
    \delta \prod_{j'=1}^j s_{j'}^{-(m+1-m_{j'})}, \dots, \delta \prod_{j'=1}^j s_{j'}^{-(n-m_{j'})}
    },
\end{equation}
and a relation
\begin{equation}\label{221026e4_3aa}
    \prod_{j'=1}^j s_{j'}^{-(n_j-m_{j'})}\le \delta^{-1}. 
\end{equation}
Moreover, by combining \eqref{221017e3_8}, \eqref{221024e3_22} and the triangle inequality, we obtain  \begin{equation}\label{221024e4_4}
\begin{split}
        \Norm{
    \sum_T \phi_T^+
    }_p \lessapprox & 
    \sum_{\bfm_j} \sum_{\bfs_j} 
    \pnorm{
    \prod_{j'=1}^j 
    \jac(s_{j'})
    }
    \pnorm{
    \prod_{j'=1}^j 
    \pnorm{
    \frac{1}{s_{j'}}
    }^{1-\frac{\mathfrak{D}_
    {n_{j'}-m_{j'}}}{p}}
    }\\
    & 
    \pnorm{
    \sum_{\ell(\Theta_j)=\ell_j}
    \Norm{
    \sum_{\theta\in \Lambda_{\ell_j^{-1}\delta}}
    \sum_{T\in \W_{\theta}(\bfs_j; \Theta_j)}
    \phiplus_{T_{\bfs_j}}
    }_p^p
    }^{1/p},
\end{split}
\end{equation}
where $\ell_j:=\prod_{j'=1}^j s_{j'},$ and for each $\bfeta$ in the support of $\widehat{\phiplus_{T_{\bfs_j}}}$, it can be written as 
\begin{equation}\label{221026e4_6hh}
    \bfeta=\mathcal{D}_{\bfm_j, \bfs_j}(\blambda)\cdot \pnorm{
    \bgamma'_{\bfm_j, \bfs_j}(\theta), \dots, \bgamma_{\bfm_j, \bfs_j}^{(n)}(\theta)
    },
\end{equation}
where 
\begin{equation}
    \mathcal{D}_{\bfm_j, \bfs_j}(\blambda):=\mathcal{D}_{m_j, s_j}\circ\dots\circ\mathcal{D}_{m_1, s_1}(\blambda),
\end{equation}
and 
\begin{equation}
    \bgamma_{\bfm_j, \bfs_j}:=\mathcal{R}_{m_j, s_j}\circ\dots\circ \mathcal{R}_{m_1, s_1}\circ \bgamma,
\end{equation}
with $\theta\in [0, 1]$ and $\blambda=(\lambda_1, \dots, \lambda_n)$ satisfying 
\begin{equation}\label{220902e3_24zzz}
    \begin{split}
        & |\lambda_1|, \dots, |\lambda_{m_j-1}| \lesim 1, \ \  |\lambda_{m_j}|\simeq 1, \ \ |\lambda_{m_j+1}|+\dots+|\lambda_m|\simeq 1,\\
        & |\lambda_{m+1}|\lesim 
        \delta 
        \prod_{j'=1}^j
        s_{j'}^{-(m+1-m_{j'})}, \dots, |\lambda_{n_j}|\lesim \delta\prod_{j'=1}^j s_{j'}^{-(n_j-m_{j'})}, 
    \end{split} 
\end{equation}
with $\simeq$ replaced by $\lesim$ when 
\begin{equation}\label{221026e4_10hh}
    \prod_{j'=1}^j s_{j'}^{n_j-m_{j'}}=\delta.
\end{equation}
We use $\Omega^+_{T_{\bfs_j}}$ to denote the the support of $\widehat{\phiplus_{T_{\bfs_j}}}$. 
So far we have finished collecting all the useful data from previous steps. \\

We continue our algorithm for the frequency decomposition in step $(j+1)$. To simplify our presentation, we will work with $\Theta_j=[0, \ell_j],$ where $\Theta_j$ is from the right hand side of \eqref{221024e4_4}. Define
\begin{equation}\label{221024e4_11}
    n_{j+1}:=
    \begin{cases}
    n_j & \text{ if } \prod_{j'=1}^j s_{j'}^{n_j-m_{j'}}> \delta,\\
    n_j-1 & \text{ if } \prod_{j'=1}^j s_{j'}^{n_j-m_{j'}}= \delta.
    \end{cases}
\end{equation}
If $n_{j+1}=m$, then we terminate our algorithm. If $n_{j+1}>m$, then we proceed as follows. \\

If we are in the former case in \eqref{221024e4_11}, then \eqref{220902e3_24zzz} tells us that 
\begin{equation}
    |\lambda_{m_{j}+1}|+\dots+|\lambda_m|\simeq 1. 
\end{equation}
For integers $m_{j+1}$ satisfying $m\ge m_{j+1}>m_j$, define
\begin{equation}
    \begin{split}
        \Omega^+_{T_{\bfs_j}, m_{j+1}}:=
        & \Big\{\bfeta=\mathcal{D}_{\bfm_j, \bfs_j}(\blambda)\cdot \pnorm{
    \bgamma'_{\bfm_j, \bfs_j}(\theta), \dots, \bgamma_{\bfm_j, \bfs_j}^{(n)}(\theta)
    }: \\
    & T\in \W_{\theta}(\bfs_j; \Theta_j), \bfeta\in \Omega^+_{T_{\bfs_j}}, |\lambda_{m_{j+1}}|\simeq 1, |\lambda_{\iota}|\ll 1, \forall \iota>m_{j+1}
    \Big\}
    \end{split}
\end{equation}
Next, for positive dyadic numbers $s_{j+1}$ with 
\begin{equation}\label{221024e4_15}
    1\gg s_{j+1}\ge \pnorm{
    \delta
    \prod_{j'=1}^j s_{j'}^{-(n_j-m_{j'})}
    }^{\frac{1}{n_{j+1}-m_{j+1}}},
\end{equation}
define 
\begin{equation}
\begin{split}
    \Omega^+_{T_{\bfs_j}, m_{j+1}, s_{j+1}}:=& 
    \set{
    \bfeta=\mathcal{D}_{\bfm_j, \bfs_j}(\blambda)\cdot \pnorm{
    \bgamma'_{\bfm_j, \bfs_j}(\theta), \dots, \bgamma_{\bfm_j, \bfs_j}^{(n)}(\theta)
    }: \\
    & \bfeta\in \Omega^+_{T_{\bfs_j}, m_{j+1}}, \frac{|\lambda_{m_{j+1}+1}|}{s_{j+1}}+\dots+\frac{|\lambda_{m}|}{s_{j+1}^{m-m_{j+1}}}\simeq 1
    },
\end{split}
\end{equation}
with $\simeq$ replaced by $\lesim$ when $\ge $ in \eqref{221024e4_15} is an equality. We define $\phiplus_{T_{\bfs_j}, m_{j+1}, s_{j+1}}$ to be a frequency projection of $\phiplus_{T_{\bfs_j}}$ to the region $\Omega^+_{T_{\bfs_j}, m_{j+1}, s_{j+1}}$ so that 
\begin{equation}\label{221027e4_17}
    \phiplus_{T_{\bfs_j}}=\sum_{m\ge m_{j+1}>m_j}
    \sum_{s_{j+1}}
    \phiplus_{T_{\bfs_j}, m_{j+1}, s_{j+1}}.
\end{equation}
\begin{claim}\label{221026claim4_1}
Under the above notation, we have 
\begin{equation}
    \begin{split}
    &   \Norm{
    \sum_{\theta\in \Lambda_{\ell_j^{-1}\delta}}
    \sum_{T\in \W_{\theta}(\bfs_j; \Theta_j)}
    \phiplus_{T_{\bfs_j}, m_{j+1}, s_{j+1}}
    }_p
     \lessapprox
    \sum_{m_{j+1}}
    \sum_{s_{j+1}}
    \pnorm{
    \frac{1}{s_{j+1}}
    }^
    {1-
    \frac{
    \mathfrak{D}_
    {n_{j+1}-m_{j+1}}
    }
    {p}
    }\\
    & 
    \pnorm{
    \sum_{\ell(\Theta_{j+1})=s_{j+1}}
    \Norm{
    \sum_{\theta\in \Lambda_{\ell_j^{-1}\delta}\cap \Theta_{j+1}}
    \sum_{T\in \W_{\theta}(\bfs_j; \Theta_j)}
    \phiplus_{T_{\bfs_j}, m_{j+1}, s_{j+1}}
    }_p^p
    }^{\frac{1}{p}}
    \end{split}
\end{equation}
\end{claim}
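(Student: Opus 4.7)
The argument follows the template of Claim \ref{221027claim3_1} almost verbatim, the only change being that the initial data $(\bgamma, \Omega^+_{T,m_1,s_1})$ are replaced by the rescaled data $(\bgamma_{\bfm_j,\bfs_j}, \Omega^+_{T_{\bfs_j}, m_{j+1}, s_{j+1}})$. The main tools are again the $\ell^p(L^p)$ decoupling inequality of \cite{BDG16} for non-degenerate curves and the bootstrapping argument of \cite{PS07}. Fix $m_{j+1}$ and $s_{j+1}$ (the outer sums over them will be absorbed into the final $\lessapprox$), pick a small parameter $\varepsilon>0$, and set
\[
F_{\Theta_{j+1}} := \sum_{\theta\in \Lambda_{\ell_j^{-1}\delta}\cap \Theta_{j+1}}\sum_{T\in \W_\theta(\bfs_j;\Theta_j)} \phiplus_{T_{\bfs_j}, m_{j+1}, s_{j+1}}.
\]
By the triangle inequality (used to localize the $\bfeta$-support to a window of relative size $s_{j+1}^\varepsilon$ in the ``free'' directions $\lambda_1,\dots,\lambda_{m_{j+1}-1}$ and to restrict to an interval $\mathfrak{I}_1$ of length $s_{j+1}^\varepsilon$), it is enough to prove the one-step estimate
\[
\|F_{\mathfrak{I}_1}\|_p \lesim \delta^{-\varepsilon^2}\bigl(1/s_{j+1}\bigr)^{(\varepsilon/(n_{j+1}-m_{j+1}))(1-\mathfrak{D}_{n_{j+1}-m_{j+1}}/p)}\Bigl(\sum_{\mathfrak{I}_2\subset \mathfrak{I}_1}\|F_{\mathfrak{I}_2}\|_p^p\Bigr)^{1/p},
\]
where $\mathfrak{I}_2$ has length $|\mathfrak{I}_1|\cdot s_{j+1}^{\varepsilon/(n_{j+1}-m_{j+1})}$, and then iterate it $(n_{j+1}-m_{j+1})/\varepsilon$ times.

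To prove this one-step estimate, I will Taylor-expand the expression \eqref{221026e4_6hh} for $\bfeta$ about the left endpoint of $\mathfrak{I}_1$ and across the small windows $|\lambda_k - a_k|\le s_{j+1}^\varepsilon$ ($k<m_{j+1}$) and $\lambda_{m_{j+1}}\in [1,1+s_{j+1}^\varepsilon]$, in exact analogy with the $\bfQ\to\bfQ'$ calculation in the proof of Claim \ref{221027claim3_1}. The coefficients $\lambda_1,\dots,\lambda_{m_{j+1}-1}$ together with $\lambda_{m_{j+1}}$ contribute an affine cylinder factor of dimension $m_{j+1}$, while the coupling of $(\lambda_{m_{j+1}+1},\dots,\lambda_{n_{j+1}})$ with the moving parameter $\theta$ produces a curve segment in the top $n_{j+1}-m_{j+1}$ coordinates; after the natural anisotropic affine rescaling in frequency, this curve segment lies in an $O(s_{j+1}^{\varepsilon(\iota-m_{j+1}+1)})$-neighbourhood of a genuine non-degenerate (moment-type) curve in $\R^{n_{j+1}-m_{j+1}}$. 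Finally, thanks to the relation \eqref{221026e4_3aa} together with the termination rule \eqref{221024e4_11}, the trailing $n-n_{j+1}$ coordinates are already confined to boxes comparable to the decoupling scale and can be enlarged to full size without any additional loss.

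Applying the $\ell^p(L^p)$ decoupling inequality of \cite{BDG16} for the non-degenerate curve in $\R^{n_{j+1}-m_{j+1}}$ at the critical exponent, lifted to the cylinder via the bootstrapping argument of \cite{PS07}, and undoing the anisotropic rescaling, produces the one-step estimate displayed above. Iterating $(n_{j+1}-m_{j+1})/\varepsilon$ times, the per-step exponents of $1/s_{j+1}$ telescope to $1-\mathfrak{D}_{n_{j+1}-m_{j+1}}/p$, the accumulated $\delta^{-\varepsilon^2}$ losses are absorbed into $\lessapprox$, and summing the resulting inequality over $m_{j+1}$ and $s_{j+1}$ gives the claim. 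The main obstacle is the verification in the Taylor step that the top $(n_{j+1}-m_{j+1})$ coordinates of $\bgamma_{\bfm_j,\bfs_j}(\theta)$ still form a non-degenerate curve after the nested rescalings $\mathcal{R}_{m_j,s_j}\circ\cdots\circ\mathcal{R}_{m_1,s_1}$ (so that \cite{BDG16} is available) and that the Taylor remainders scale at the precise rate $s_{j+1}^{\varepsilon(\iota-m_{j+1}+1)}$ required for this anisotropic decoupling to apply; both are direct computations from the explicit form of $\mathcal{R}_{m_k,s_k}$, but the combinatorial bookkeeping of powers of the $s_{j'}$ is the one genuinely new ingredient compared to Claim \ref{221027claim3_1}.
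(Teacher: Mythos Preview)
Your proposal is correct and follows essentially the same approach as the paper: the paper's own proof of Claim \ref{221026claim4_1} simply reads ``The proof of this claim is essentially the same as that of Claim \ref{221027claim3_1}, and is therefore left out.'' Your write-up is a faithful expansion of that one sentence, correctly identifying the Taylor/\,$\bfQ\to\bfQ'$ step, the anisotropic rescaling, the application of the \cite{BDG16} decoupling for the moment-type curve in $\R^{n_{j+1}-m_{j+1}}$, the \cite{PS07} bootstrapping, and the iteration; nothing more is needed.
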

\begin{proof}[Proof of Claim \ref{221026claim4_1}] 
The proof of this claim is essentially the same as that of Claim \ref{221027claim3_1}, and is therefore left out. 
\end{proof}
After this step of decoupling, we will apply a similar scaling to the one in \eqref{220901e3_13}. To explain this scaling, let us take $\Theta_{j+1}=[0, s_{j+1}]$; other intervals can be handled similarly. Let $\bxi$ denote a point in the Fourier support of 
\begin{equation}
    \sum_{\theta\in \Lambda_{\ell_j^{-1}\delta}\cap \Theta_{j+1}}
    \sum_{T\in \W_{\theta}(\bfs_j; \Theta_j)}
    \phiplus_{T_{\bfs_j}, m_{j+1}, s_{j+1}}.
\end{equation}
We apply the change of variables 
\begin{equation}\label{221025e4_20}
    \xi_1\mapsto \eta_1, \ \dots, \xi_{m_{j+1}}\mapsto \eta_{m_{j+1}}, \ \xi_{m_{j+1}+1}\mapsto s_{j+1} \eta_{m_{j+1}+1}, \dots, \xi_n\mapsto s_{j+1}^{n-m_{j+1}}\eta_n. 
\end{equation}
Let us make a remark here that in \eqref{221025e4_20} we indeed have multiple ways of doing changes of variables. For instance, another option would be
\begin{equation}
    \begin{split}
        & \xi_1\mapsto \eta_1, \ \dots, \xi_{m_{j+1}}\mapsto \eta_{m_{j+1}}, \ \xi_{m_{j+1}+1}\mapsto s_{j+1} \eta_{m_{j+1}+1}, \dots, \xi_{n_{j+1}}\mapsto s_{j+1}^{n_{j+1}-m_{j+1}}\eta_{n_{j+1}}, \\
        & \xi_{n_{j+1}+1}\mapsto \eta_{n_{j+1}+1}, \dots, \xi_{n}\mapsto \eta_n. 
    \end{split}
\end{equation}
In other words, we keep all the variables $\xi_{\iota}$ with $\iota> n_{j+1}$ unchanged. The idea is that, after arriving at the $(j+1)$-th step of the algorithm, all the frequency directions $\xi_{\iota}$ with $\iota> n_{j+1}$ will no longer play any role in any further decoupling inequalities, and instead we will simply use Fubini theorem along those directions. Here to make our notation consistent with previous ones, we still use the scaling in \eqref{221025e4_20}. \\

After the above change of variables, we define 
\begin{equation}
    \begin{split}
        & \bfs_{j+1}:=(s_1, \dots, s_j, s_{j+1}),\ \ \bfm_{j+1}=(m_1, \dots, m_j, m_{j+1}),\\
        & \thick_{j+1}:=
        \pnorm{
    \delta \prod_{j'=1}^{j+1} s_{j'}^{-(m+1-m_{j'})}, \dots, \delta \prod_{j'=1}^{j+1} s_{j'}^{-(n-m_{j'})}
    },
    \end{split}
\end{equation}
and also define
\begin{equation}
    \widehat{\phi}^{+}_{T_{\bfs_{j+1}}}(\bfeta):=\widehat{
    \phi
    }^+_{T_{\bfs_j}, m_{j+1}, s_{j+1}}(\eta_1, \dots, \eta_{m_{j+1}}, 
    s_{j+1} \eta_{m_{j+1}+1}, \dots, 
    s_{j+1}^{n-m_{j+1}}\eta_n
    ).
\end{equation}
Note from \eqref{221024e4_15} we see that the relation \eqref{221026e4_3aa} is upgraded to \begin{equation}
    \prod_{j'=1}^{j+1} s_{j'}^{n_{j+1}-m_{j'}}\le \delta;
\end{equation}
here we used the fact that $n_j=n_{j+1}$. Moreover, we combine \eqref{221024e4_4} and Claim \ref{221026claim4_1}, and see that \eqref{221024e4_4} holds with $j$ replaced by $j+1$, with $\ell_{j+1}:=\ell_j\cdot s_{j+1}$. For a frequency point $\bfeta$ in the support of $\widehat{\phi}^+_{T_{\bfs_{j+1}}}$, one can check directly that it is of the form \eqref{221026e4_6hh}--\eqref{221026e4_10hh}, with every $j$ there replaced by $j+1$.  In the end, we let $\Omega^+_{T_{\bfs_{j+1}}}$ denote the support of $\widehat{\phi}^+_{T_{\bfs_{j+1}}}$, and finish the analysis for the former case in \eqref{221024e4_11}.\\

We consider the latter case in \eqref{221024e4_11}. The analysis for this case is similar to the former case, and we only explain the differences. Recall from \eqref{220902e3_24zzz} that 
\begin{equation}
    |\lambda_{m_j+1}|+\dots+|\lambda_m|\lesim 1. 
\end{equation}
For integers $m_{j+1}$ satisfying $m\ge m_{j+1}\ge m_j$, define
\begin{equation}
    \begin{split}
        \Omega^+_{T_{\bfs_j}, m_{j+1}}:=
        & \Big\{\bfeta=\mathcal{D}_{\bfm_j, \bfs_j}(\blambda)\cdot \pnorm{
    \bgamma'_{\bfm_j, \bfs_j}(\theta), \dots, \bgamma_{\bfm_j, \bfs_j}^{(n)}(\theta)
    }: \\
    & T\in \W_{\theta}(\bfs_j; \Theta_j), \bfeta\in \Omega^+_{T_{\bfs_j}}, |\lambda_{m_{j+1}}|\simeq 1, |\lambda_{\iota}|\ll 1, \forall \iota>m_{j+1}
    \Big\}
    \end{split}
\end{equation}
Note that here $m_{j+1}$ could be equal to $m_j$, which is different the previous case.  Next, for positive dyadic numbers $s_{j+1}$ with 
\begin{equation}\label{221024e4_15dd}
    1\gg s_{j+1}\ge \pnorm{
    \delta
    \prod_{j'=1}^j s_{j'}^{-(n_j-1-m_{j'})}
    }^{\frac{1}{n_{j+1}-m_{j+1}}},
\end{equation}
define 
\begin{equation}
\begin{split}
    \Omega^+_{T_{\bfs_j}, m_{j+1}, s_{j+1}}:=& 
    \set{
    \bfeta=\mathcal{D}_{\bfm_j, \bfs_j}(\blambda)\cdot \pnorm{
    \bgamma'_{\bfm_j, \bfs_j}(\theta), \dots, \bgamma_{\bfm_j, \bfs_j}^{(n)}(\theta)
    }: \\
    & \bfeta\in \Omega^+_{T_{\bfs_j}, m_{j+1}}, \frac{|\lambda_{m_{j+1}+1}|}{s_{j+1}}+\dots+\frac{|\lambda_{m}|}{s_{j+1}^{m-m_{j+1}}}\simeq 1
    },
\end{split}
\end{equation}
with $\simeq$ replaced by $\lesim$ when $\ge $ in \eqref{221024e4_15dd} is an equality. If we compare \eqref{221024e4_15dd} with \eqref{221024e4_15}, then we see  that $n_j$ in \eqref{221024e4_15} is replaced by $n_j-1=n_{j+1}$. The principle behind it is that we always pick the right most component in the vector $\thick_j$ that is strictly smaller than one. The rest of the argument remains the same.

\section{Output of the algorithm}

The above algorithm outputs $J$-tuples of integers $\bfm_J=(m_1, \dots, m_J)$ with 
\begin{equation}
    m_1\le \dots\le m_J\le m,
\end{equation}
$J$-tuples of dyadic numbers $\bfs_J=(s_1, \dots, s_J)$, dimensions parameters $(n_1, \dots, n_J)$ satisfying 
\begin{equation}
    n_1\ge \dots\ge n_J>m,
\end{equation}
for each $J=1, 2, \dots, \mathfrak{J}$, with $|\mathfrak{J}|\lesim_n 1$. The bound for $\mathfrak{J}$ follows from the fact that after each step of running our algorithm, either the dimension parameter $n_j$ reduces by one, or the parameter $m_{j}$ increases by one.  Moreover, 
\begin{equation}\label{221027e5_3}
\begin{split}
        \Norm{
    \sum_T \phi_T^+
    }_p \lessapprox & 
    \sum_{J=1}^{\mathfrak{J}}
    \sum_{\bfm_J} \sum_{\bfs_J} 
    \pnorm{
    \prod_{j=1}^J 
    \jac(s_{j})
    }
    \pnorm{
    \prod_{j=1}^J 
    \pnorm{
    \frac{1}{s_{j}}
    }^{1-\frac{\mathfrak{D}_
    {n_{j}-m_{j}}}{p}}
    }\\
    & 
    \pnorm{
    \sum_{\ell(\Theta_J)=\ell_J}
    \Norm{
    \sum_{\theta\in \Lambda_{\ell_J^{-1}\delta}}
    \sum_{T\in \W_{\theta}(\bfs_J; \Theta_J)}
    \phiplus_{T_{\bfs_J}}
    }_p^p
    }^{1/p}.
\end{split}
\end{equation}
That the algorithm terminates after producing the parameters $\bfm_J, \bfs_J$, etc. means that $n_J=m+1$, and 
\begin{equation}\label{221027e5_4zz}
    \prod_{j=1}^J s_{j}^{n_J-m_{j}}=\delta. 
\end{equation}
For the term $\phiplus_{T_{\bfs_J}}$, we revert all the changes of variables as in \eqref{221025e4_20} and \eqref{220901e3_13}, and write the resulting function as $\phiplus_T* \widecheck{\psi_{\tau, \bfs_J}}$, where $\tau=\tau(T)$ is the dual of $T$, and we use $\psi_{\tau, \bfs_J}$ to collect all the frequency cut-offs in \eqref{221027e4_17}. In other words, each $\psi_{\tau, \bfs_J}$ is supported on a dyadic rectangular box that is a subset of $\tau$. The estimate \eqref{221027e5_3} becomes 
\begin{equation}\label{221027e5_5}
\begin{split}
        \Norm{
    \sum_T \phi_T^+
    }_p \lessapprox & 
    \sum_{J=1}^{\mathfrak{J}}
    \sum_{\bfm_J} \sum_{\bfs_J} 
    \pnorm{
    \prod_{j=1}^J 
    \pnorm{
    \frac{1}{s_{j}}
    }^{1-\frac{\mathfrak{D}_
    {n_{j}-m_{j}}}{p}}
    }\\
    & 
    \pnorm{
    \sum_{\ell(\Theta_J)=\ell_J}
    \Norm{
    \sum_{\theta\in \Lambda_{\delta}\cap \Theta_J}
    \sum_{T\in \W_{\theta}}
    \phiplus_{T}* \widecheck{\psi_{\tau, \bfs_J}}
    }_p^p
    }^{1/p}.
\end{split}
\end{equation}
Let us write down again the family of relations in \eqref{221026e4_3aa}
\begin{equation}
    \prod_{j'=1}^j s_{j'}^{-(n_j-m_{j'})}\le \delta^{-1},
\end{equation}
which holds for every $J$-tuple $\bfm_J, \bfs_J$ and every $j\le J$. Moreover, if at a given $j$, it holds that $n_{j+1}=n_j-1$, then 
\begin{equation}\label{221027e5_7ppp}
    \prod_{j'=1}^j s_{j'}^{n_j-m_{j'}}=\delta,
\end{equation}
that is, we have equality in the previous relation.

We continue to bound \eqref{221027e5_5}. At this point, we do not have any non-trivial decoupling to use, and we simply use the triangle  inequality. Note that \eqref{221027e5_4zz} implies that $\ell_J\ge \delta$. By the triangle inequality, 
\begin{equation}\label{221027e5_5zzz}
\begin{split}
        \Norm{
    \sum_T \phi_T^+
    }_p \lessapprox & 
    \sum_{J=1}^{\mathfrak{J}}
    \sum_{\bfm_J} \sum_{\bfs_J} 
    \pnorm{
    \prod_{j=1}^J 
    \pnorm{
    \frac{1}{s_{j}}
    }^{1-\frac{\mathfrak{D}_
    {n_{j}-m_{j}}}{p}}
    }
    \pnorm{\ell_J \delta^{-1}}^{1-\frac{1}{p}}
    \pnorm{
    \sum_{\theta\in \Lambda_{\delta}}
    \Norm{
    \sum_{T\in \W_{\theta}}
    \phiplus_{T}* \widecheck{\psi_{\tau, \bfs_J}}
    }_p^p
    }^{1/p}.
\end{split}
\end{equation}
To bound the last $L^p$, note that $\widecheck{\psi_{\tau, \bfs_J}}$ is independent of $T\in \W_{\theta}$ for each fixed $\theta$. Therefore, by Young's inequality, 
\begin{equation}
    \begin{split}
        \Norm{
    \sum_T \phi_T^+
    }_p \lessapprox & 
    \sum_{J=1}^{\mathfrak{J}}
    \sum_{\bfm_J} \sum_{\bfs_J} 
    \pnorm{
    \prod_{j=1}^J 
    \pnorm{
    \frac{1}{s_{j}}
    }^{1-\frac{\mathfrak{D}_
    {n_{j}-m_{j}}}{p}}
    }
    \pnorm{\ell_J \delta^{-1}}^{1-\frac{1}{p}}
    \pnorm{
    \sum_{\theta\in \Lambda_{\delta}}
    \Norm{
    \sum_{T\in \W_{\theta}}
    \phiplus_{T}
    }_p^p
    }^{1/p}.
\end{split}
\end{equation}
Recall that we need to prove \eqref{221027e2_16ppp}. It therefore remains to prove that \begin{equation}
    \Big(\prod_{j=1}^J \pnorm{\frac{1}{s_j}}^{p-\mathfrak{D}_{n_j-m_j}}\Big) 
    \pnorm{
    \frac{\ell_J}{\delta}
    }^{p-1}
    \delta^{m-n} \lesim \delta^{-p+1},
\end{equation}
which is equivalent to 
\begin{equation}
    \prod_{j=1}^J s_j^{\mathfrak{D}_{n_j-m_j}-1} \lesim \delta^{n-m}. 
\end{equation}
By the definition of $\mathfrak{D}_{\iota}$, it suffices to prove 
\begin{equation}
    \prod_{j=1}^J s_j^{1+\dots+(n_j-m_j)}\lesim \delta^{n-m}. 
\end{equation}
However, this follows immediately from multiplying all the $(n-m)$ identities in \eqref{221027e5_7ppp}.

\normalem

\end{document}